\documentclass[12pt]{amsart}
\usepackage[utf8]{inputenc}
\usepackage{amsmath,amssymb,amsthm}
\usepackage{mathrsfs}
\usepackage{geometry}
\usepackage{graphicx}  
\usepackage{hyperref}
\usepackage{cite}
\usepackage{pgfplots}
\pgfplotsset{compat=1.18}
\newtheorem{theorem}{Theorem}[section]
\newtheorem{lemma}[theorem]{Lemma}
\newtheorem{proposition}[theorem]{Proposition}

\theoremstyle{remark}
\newtheorem{remark}{Remark}
\theoremstyle{definition}
\newtheorem{definition}[theorem]{Definition}
\newtheorem{assumption}[theorem]{Assumption}
\newtheorem{example}[theorem]{Example}

\numberwithin{equation}{section}

\title{Stochastic Stability of ACIMs for Piecewise Expanding $C^{1+\varepsilon}$ Maps}
\author{Aparna Rajput}
\address{Department of Mathematics and Statistics, Concordia University, Montreal, H3G 1M8, QC, Canada}
\email{aparna.ar.rajput@gmail.com}
\date{\today}
\subjclass[2020]{Primary 37A05; Secondary 37E05}
\begin{document}

\begin{abstract}
We prove stochastic stability of absolutely continuous invariant measures (ACIMs) for piecewise expanding $C^{1+\varepsilon}$ maps of the interval. For maps $\tau$ in the class $\mathcal{T}([0,1]; s, \varepsilon)$, we consider perturbed Frobenius--Perron operators $P_\delta = Q_\delta P_\tau$, where $Q_\delta$ is a Markov smoothing operator modeling noise of intensity $\delta > 0$.

In the generalized bounded variation space $BV_{1,1/p}(I)$, we establish a Lasota--Yorke inequality uniform in $\delta$. Consequently, each $P_\delta$ admits an invariant density $h_\delta \in BV_{1,1/p}(I)$, and $h_\delta \to h$ in $L^1(I)$ as $\delta \to 0$, where $h$ is the ACIM density of $P_\tau$.

 Our proof combines the $BV_{1,1/p}(I)$ framework with uniform quasi-compactness and the
  Keller--Liverani perturbation theory for transfer operators. Under a topological mixing
  assumption on~$\tau$, this establishes stochastic stability at the $C^{1+\varepsilon}$ threshold,
  which is essentially optimal for ACIM existence: $C^1$ regularity alone is insufficient to
  guarantee the existence of an ACIM.
\end{abstract}
\maketitle
\section{Introduction}

Piecewise expanding maps of the interval form a fundamental class of dynamical systems in one dimension. A central object in their study is an absolutely continuous invariant measure (ACIM), whose density describes the long-term statistical behavior of almost all initial conditions. A classical result of Lasota and Yorke~\cite{lasota-yorke} establishes the existence of ACIMs for sufficiently smooth maps with uniform expansion via what is now known as the Lasota--Yorke inequality. This approach provides control over both variation and the \(L^1(I)\) norm of densities and leads to strong statistical properties such as mixing and decay of correlations. Subsequent works by Pianigiani-Yorke~\cite{pianigiani1979}, Hofbauer--Keller~\cite{hofbauer1982}, and Keller~\cite{keller1985} extended these results to broader settings, typically under higher regularity assumptions.

More recently, attention has turned to systems with minimal smoothness. In particular, Rajput and Góra~\cite{RajputGora2025} proved the existence of ACIMs for piecewise \(C^{1+\varepsilon}\) expanding maps by working in the generalized bounded variation space \(BV_{1,1/p}(I)\) defined via oscillation seminorms:
\begin{equation}\label{eq:BVnorm}
\|f\|_{1,1/p}
=
\sup_{0<r\le 1}\frac{\operatorname{Osc}_1(f,r)}{r^{1/p}}
+
\|f\|_{1}.
\end{equation}
This framework allows one to treat maps whose derivatives are only Hölder continuous, and the assumption of \(C^{1+\varepsilon}\) regularity (\(\varepsilon>0\)) is essentially sharp, as \(C^1\) regularity alone is insufficient to guarantee the existence of ACIMs.

A natural question is whether these invariant measures are stable under small perturbations. This problem, known as stochastic stability, concerns the persistence of invariant densities under the introduction of noise or small perturbations of the dynamics. It plays an important role in understanding the robustness of statistical properties of dynamical systems, particularly in applications where noise or numerical approximation is unavoidable.  Classical results on stochastic stability have been established for uniformly expanding maps
  in spaces such as bounded variation or H\"older spaces, typically under stronger smoothness
  assumptions. In the piecewise $C^{1+\varepsilon}$ setting specifically, G\'ora~\cite{gora1984}
  proved stochastic stability using Rychlik's $C_\varepsilon$ seminorm, under expansion conditions
  more restrictive than those required here. The present paper reproves and strengthens this
  result in the $BV_{1,1/p}(I)$ framework via the Keller--Liverani perturbation theory, with
  weaker hypotheses and a more transparent spectral argument.

The purpose of this paper is to study stochastic stability of ACIMs for piecewise expanding \(C^{1+\varepsilon}\) maps in the above low-regularity setting. Let \(\tau\) be a map in the class \(\mathcal{T}([0,1];s,\varepsilon)\) of piecewise expanding \(C^{1+\varepsilon}\) maps of the interval. The noise operator \(Q_\delta\) is defined via a periodized (torus) convolution kernel, which automatically satisfies the Markov property
\[
\int_I q_\delta^{\mathrm{per}}(x,y)\,dy = 1
\quad \text{for all } x \in I,
\]
without requiring a separate boundary correction. This is the mechanism that allows us to work directly on the interval \(I=[0,1]\). We consider a family of perturbed Frobenius--Perron operators of the form
\begin{equation}\label{eq:perturbedFP}
P_\delta = Q_\delta P_\tau,
\end{equation}
where \(P_\tau\) is the Frobenius--Perron operator associated with \(\tau\), and \(Q_\delta\) is a Markov smoothing operator representing noise of intensity \(\delta>0\). Working in the space \(BV_{1,1/p}(I)\), we establish a Lasota--Yorke inequality that is uniform in \(\delta\). As a consequence, each operator \(P_\delta\) admits an invariant density \(h_\delta \in BV_{1,1/p}(I)\), and we prove that \(\|h_\delta-h\|_{L^1(I)}\to 0\) as \(\delta\to 0\), where \(h\) is the invariant density of the unperturbed system \(P_\tau\) in \(BV_{1,1/p}(I)\).

Our approach combines the generalized bounded variation framework with perturbation techniques for quasi-compact operators. This extends stochastic stability results to piecewise \(C^{1+\varepsilon}\) maps under minimal regularity assumptions in the \(BV_{1,1/p}(I)\) setting.

The paper is organized as follows. In Section 2, we recall the definition and properties of the space \(BV_{1,1/p}(I)\) and define the class \(\mathcal{T}([0,1];s,\varepsilon)\) of piecewise expanding \(C^{1+\varepsilon}\) maps. In Section 3, we define the perturbation operators and establish a uniform Lasota--Yorke inequality. Section 4 proves existence of invariant densities and stochastic stability via convergence
  in $L^1(I)$. Section 5 presents a nonlinear example illustrating the theory.
  Section 6 discusses remarks and further directions.
\section{Preliminaries}

In this section, we recall the definition and basic properties of the generalized bounded variation spaces \(BV_{1,1/p}(I)\), following~\cite{RajputGora2025, keller1985}. These spaces provide the appropriate functional setting for studying the Frobenius--Perron operator associated with piecewise expanding maps of low regularity.

Let \(I = [0,1]\), and let \(m\) denote the Lebesgue measure on \(I\). For a measurable function \(f : I \to \mathbb{R}\) and \(r > 0\), define the oscillation of \(f\) at scale \(r\) by
\begin{equation}\label{eq:osc}
   \operatorname{Osc}(f,r,x)
   =
   \sup \bigl\{ |f(y_1) - f(y_2)| : y_1, y_2 \in (x-r,x+r) \cap I \bigr\}.
\end{equation}
The function \(\operatorname{Osc}(f,r,\cdot)\) is measurable and describes the local variation of \(f\).

For \(1 \le p < \infty\), define
\begin{equation}\label{eq:osc1}
    \operatorname{Osc}_1(f,r)
    =
    \int_I \operatorname{Osc}(f,r,x)\,dm(x).
\end{equation}

\begin{definition}\label{def:BV}
Let \(p \ge 1\). The space \(BV_{1,1/p}(I)\) consists of all functions \(f \in L^1(I)\) such that
\begin{equation}\label{eq:var}
    \operatorname{var}_{1,1/p}(f)
    :=
    \sup_{0<r\le 1}
    \frac{\operatorname{Osc}_1(f,r)}{r^{1/p}}
    < \infty.
\end{equation}
We equip this space with the norm
\begin{equation}\label{eq:BVnorm2}
  \|f\|_{1,1/p}
  =
  \operatorname{var}_{1,1/p}(f) + \|f\|_{1}.
\end{equation}
\end{definition}

The space $\bigl(BV_{1,1/p}(I), \|\cdot\|_{1,1/p}\bigr)$ is a Banach space and is continuously embedded in \(L^1(I)\). Moreover, the unit ball in \(BV_{1,1/p}(I)\) is relatively compact in \(L^1(I)\).

We will use the following basic properties of oscillation.

\begin{proposition}\label{prop:osc-prop}
Let \(f \in BV_{1,1/p}(I)\) and \(r > 0\). Then:
\begin{enumerate}
\item \(\operatorname{Osc}(f,r,\cdot)\) is lower semicontinuous and hence measurable;
\item \(\operatorname{Osc}_1(f,r)\) is nondecreasing in \(r\).
\end{enumerate}
\end{proposition}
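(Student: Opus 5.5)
For part (1), I will show directly that $x\mapsto \operatorname{Osc}(f,r,x)$ is lower semicontinuous. Fix $x_0\in I$ and any $c<\operatorname{Osc}(f,r,x_0)$. By definition of the supremum there are points $y_1,y_2\in(x_0-r,x_0+r)\cap I$ with $|f(y_1)-f(y_2)|>c$. Since the interval $(x_0-r,x_0+r)$ is open, set
\[
\eta=\min\{\,r-|y_1-x_0|,\ r-|y_2-x_0|\,\}>0 .
\]
For every $x$ with $|x-x_0|<\eta$ the triangle inequality gives $|y_i-x|\le |y_i-x_0|+|x-x_0|<r$, so $y_1,y_2\in(x-r,x+r)\cap I$. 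Hence $\operatorname{Osc}(f,r,x)\ge|f(y_1)-f(y_2)|>c$ on a neighbourhood of $x_0$, which is exactly lower semicontinuity at $x_0$. The superlevel sets $\{x:\operatorname{Osc}(f,r,x)>c\}$ are therefore open in $I$, so the function is Borel measurable, and its integral $\operatorname{Osc}_1(f,r)$ in~\eqref{eq:osc1} is well defined in $[0,\infty]$.

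One technical point needs attention. Elements of $BV_{1,1/p}(I)$ are $L^1$ classes, while the pointwise supremum in~\eqref{eq:osc} depends on the chosen representative. I will note that the argument above works for any fixed representative. If the convention of~\cite{RajputGora2025, keller1985} uses the essential supremum instead, the same proof goes through with $y_1,y_2$ replaced by Lebesgue points or density points: a positive-measure set of pairs witnessing $>c$ inside $(x_0-r,x_0+r)$ stays inside $(x-r,x+r)$ for $x$ near $x_0$ by the same openness argument. This bookkeeping is the only real obstacle in the proof, and it is mild.

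For part (2), let $0<r<r'$. Then $(x-r,x+r)\cap I\subseteq(x-r',x+r')\cap I$ for every $x$, so the supremum in~\eqref{eq:osc} is taken over a larger set of pairs and $\operatorname{Osc}(f,r,x)\le\operatorname{Osc}(f,r',x)$ pointwise. Integrating over $I$ against $m$, using the monotonicity of the integral for nonnegative measurable functions, gives $\operatorname{Osc}_1(f,r)\le\operatorname{Osc}_1(f,r')$.
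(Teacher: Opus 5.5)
Your proof is correct. The paper gives no proof of this proposition of its own; it recalls these facts from \cite{RajputGora2025, keller1985}. Your openness argument for lower semicontinuity and your ball-inclusion argument for monotonicity are the standard ones.

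The one place to tighten is the essential-supremum aside. The whole witnessing set of pairs need not ``stay inside'' the shifted ball. Instead, by continuity of measure, first pass to its intersection with $\bigl([x_0-r+1/k,\,x_0+r-1/k]\cap I\bigr)^2$ for some large $k$; this intersection still has positive measure and lies in $\bigl((x-r,x+r)\cap I\bigr)^2$ whenever $|x-x_0|<1/k$.
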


We now recall a key estimate that will be used repeatedly.

\begin{proposition}\label{prop:LY-estimate}
Let \(\tau : I \to I\) be monotone on an interval \(J \subset I\), and suppose that \(|\tau'(x)| \ge s > 1\) for all \(x \in J\). Then for any function \(f : J \to \mathbb{R}\) and any \(r > 0\),
\begin{equation}\label{eq:osc-inverse}
    \operatorname{Osc}(f \circ \tau^{-1}, r, y)
    \le
    \operatorname{Osc}\left(f, \frac{r}{s}, \tau^{-1}(y)\right)
\end{equation}
for all \(y \in \tau(J)\).
\end{proposition}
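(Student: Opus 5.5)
The approach is to reduce the oscillation estimate to a Lipschitz bound for the inverse branch. Since $\tau$ is monotone and differentiable on $J$ with $|\tau'|\ge s>1$, it is injective on $J$. The inverse $\tau^{-1}:\tau(J)\to J$ is well defined and monotone. The mean value theorem then gives, for $y_1,y_2\in\tau(J)$, a point $\xi$ between $\tau^{-1}(y_1)$ and $\tau^{-1}(y_2)$ with
\[
|y_1-y_2| = |\tau'(\xi)|\,|\tau^{-1}(y_1)-\tau^{-1}(y_2)| \ge s\,|\tau^{-1}(y_1)-\tau^{-1}(y_2)|.
\]
Hence $\tau^{-1}$ is Lipschitz with constant $1/s$. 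This first step is routine. The only point to check is that $\xi\in J$, which holds because $J$ is an interval.

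Next I take the definition \eqref{eq:osc} of $\operatorname{Osc}(f\circ\tau^{-1},r,y)$. The relevant points are $y_1,y_2\in(y-r,y+r)\cap\tau(J)$, since $f\circ\tau^{-1}$ is only defined on $\tau(J)$; I read the oscillation there as taken over the domain of the function. Set $x=\tau^{-1}(y)$ and $x_i=\tau^{-1}(y_i)$. The Lipschitz bound gives $|x_i-x|\le |y_i-y|/s<r/s$, so $x_i\in(x-r/s,x+r/s)\cap J$. Therefore
\[
|f(\tau^{-1}(y_1))-f(\tau^{-1}(y_2))| = |f(x_1)-f(x_2)| \le \operatorname{Osc}\left(f,\tfrac{r}{s},x\right),
\]
where the oscillation of $f$ is likewise taken over $J$, its domain. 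Taking the supremum over admissible $y_1,y_2$ yields \eqref{eq:osc-inverse}.

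The main obstacle is interpretive rather than analytic. The statement implicitly requires the oscillation of $f\circ\tau^{-1}$ to be computed only over points of $\tau(J)$. If points of $I\setminus\tau(J)$ were allowed, with $f\circ\tau^{-1}$ extended by zero as in the Frobenius--Perron setting, the inequality could fail near the endpoints of $\tau(J)$. I would therefore state this convention explicitly. The endpoint effects would then be handled separately in the later Lasota--Yorke computation, as is standard: extra terms of order $\sup|f|$ times the number of branches, controlled in $L^1$ by the $BV_{1,1/p}$ norm. A last minor point is that $\tau$ need only be $C^1$ on the interior of $J$ for the mean value theorem, which the class $\mathcal{T}([0,1];s,\varepsilon)$ provides.
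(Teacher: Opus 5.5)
Your argument is correct. The paper gives no proof of this proposition, since it is recalled from earlier work, but its remark that the estimate ``reflects the contraction of inverse branches of expanding maps'' is exactly your route: the mean value theorem makes $\tau^{-1}$ a $1/s$-Lipschitz map, so it carries $(y-r,y+r)\cap\tau(J)$ into $(\tau^{-1}(y)-r/s,\tau^{-1}(y)+r/s)\cap J$. Your convention that both oscillations are taken over the domains $\tau(J)$ and $J$ of the respective functions is the right reading, because with the literal intersection with $I$ and zero extension the inequality can fail near the endpoints of $\tau(J)$.
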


This estimate reflects the contraction of inverse branches of expanding maps and plays a crucial role in establishing Lasota--Yorke type inequalities.

Finally, we define the class of maps under consideration.

\begin{definition}\label{def:T-class}
    A map \(\tau: I \to I\) belongs to the class \(\mathcal{T}([0,1]; s, \varepsilon)\) if there exists a finite partition \(0 = a_0 < a_1 < \cdots < a_q = 1\) such that:
\begin{enumerate}
    \item for each \(i=1,\dots,q\), \(\tau\) is monotone and \(C^{1+\varepsilon}\) on \((a_{i-1}, a_i)\), extending continuously to \([a_{i-1}, a_i]\);
    \item \(|\tau'(x)| \geq s > 1\) for all \(x\) where defined;
    \item \(\tau'\) is \(\varepsilon\)-Hölder continuous on each interval.
\end{enumerate}
\end{definition}

\begin{assumption}\label{ass:LY}[Lasota--Yorke condition for \(P_\tau\)]
 By Theorem~3.1 of~\cite{RajputGora2025}, there exist constants $0 < \alpha_0 < 1$ and
  $C_0 > 0$, depending only on $\tau$, such that
  \[
    \operatorname{var}_{1,1/p}(P_\tau f) \leq \alpha_0 \|f\|_{1,1/p} + C_0 \|f\|_1
    \qquad \text{for all } f \in BV_{1,1/p}(I).
  \]
  Moreover, $\alpha_0 \leq s^{-1/p}$ by~\cite[Theorem~3.1]{RajputGora2025}.
\end{assumption}

These preliminaries will be used in the subsequent sections to analyze the perturbed operators and establish stochastic stability.
\section{Perturbation Operators and Uniform Lasota--Yorke Inequality}

  Let $q : \mathbb{R} \to [0,\infty)$ be a $C^1$ function with compact support satisfying
  \[
      \int_{\mathbb{R}} q(z)\,dz = 1, \qquad \operatorname{supp}(q) \subset [-1,1],
  \]
  and assume in addition that $q' \in L^\infty(\mathbb{R})$. For $\delta \in (0,1/4)$, define
  the \emph{periodized kernel}
  \[
      q_\delta^{\mathrm{per}}(x,y)
      = \sum_{n\in\mathbb{Z}} \delta^{-1}\,q\!\left(\frac{x-y+n}{\delta}\right),
      \qquad x,y \in I,
  \]
  and the \emph{smoothing operator}
  \[
      (Q_\delta f)(x) = \int_I q_\delta^{\mathrm{per}}(x,y)\,f(y)\,dy, \qquad x \in I.
  \]
  \begin{assumption}\label{ass:joint}
  We assume that $\tau$ and $q$ satisfy the joint expansion--kernel condition
  \[
    \alpha_0 \;<\; \frac{1}{\widetilde{C}_1},
    \qquad
    \widetilde{C}_1 := (1+2^{1+1/p})\cdot 2^{1/p}\max\!\bigl\{1,\,4\|q'\|_\infty\bigr\}.
  \]
  Since $\alpha_0 \leq s^{-1/p}$ (Assumption~\ref{ass:LY}), this holds whenever
  $s^{1/p} > \widetilde{C}_1$, i.e., $s > \widetilde{C}_1^{\,p}$.
  For any fixed $\tau$, it is satisfied by choosing $q$ with $\|q'\|_\infty$ small enough
  that $4\|q'\|_\infty \leq 1$ and $s^{1/p} > (1+2^{1+1/p})\cdot 2^{1/p}$.
  \end{assumption}
  Since $\operatorname{supp}(q)\subset[-1,1]$ and $\delta<1/4$, for each pair $(x,y)\in I\times I$
  at most two terms in the sum are nonzero. The substitution $u=(x-y+n)/\delta$ gives
  \[
      \int_I q_\delta^{\mathrm{per}}(x,y)\,dx
      = \sum_{n\in\mathbb{Z}}\int_{(n-y)/\delta}^{(n+1-y)/\delta} q(u)\,du
      = \int_{-\infty}^\infty q(u)\,du = 1
      \quad\text{for all } y\in I,
  \]
  and the identical argument with the roles of $x$ and $y$ interchanged gives
  $\int_I q_\delta^{\mathrm{per}}(x,y)\,dy=1$ for all $x\in I$, so the kernel is
  doubly stochastic on $I$. For $x\in[\delta,1-\delta]$ the support of
  $q_\delta^{\mathrm{per}}(x,\cdot)$ lies entirely inside $I$, so
  $q_\delta^{\mathrm{per}}(x,y)=\delta^{-1}q((x-y)/\delta)$ there; the periodization
  affects only the strips $[0,\delta)\cup(1-\delta,1]$. Since $q_\delta^{\rm per}$ is precisely the convolution kernel of $\mathbb{T} = \mathbb{R}/\mathbb{Z}$
  restricted to $I \times I$, the operator $Q_\delta$ on $I$ coincides with torus convolution.
  We identify functions on $I$ with their periodic extensions to $\mathbb{T}$ when applying
  convolution operators. The perturbed operator is $P_\delta = Q_\delta P_\tau$.
  The oscillation estimates established in Lemmas~\ref{lem:osc-small-r} and ~\ref{lem:translation} below therefore apply directly
  to $Q_\delta$ on $I$ without any boundary correction.

  \begin{proposition}\label{prop:markov}
  For each $\delta>0$, the operator $P_\delta$ is positive, preserves integrals,
  \[
      \int_I P_\delta f\,dm = \int_I f\,dm \quad\text{for all } f\in L^1(I),
  \]
  and is a contraction on $L^1(I)$: $\|P_\delta f\|_1 \le \|f\|_1$ for all $f\in L^1(I)$.
  Moreover, if $f\ge 0$ then $\|P_\delta f\|_1 = \|f\|_1$.
  \end{proposition}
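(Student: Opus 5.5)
The plan is to treat $P_\delta = Q_\delta P_\tau$ as a composition of two operators and prove each of the three properties for each factor separately, since positivity, integral preservation and $L^1$-contractivity are all stable under composition. For the Frobenius--Perron operator I will use the standard facts. $P_\tau$ is defined by the duality $\int_I (P_\tau f)\,g\,dm = \int_I f\,(g\circ\tau)\,dm$ for $f\in L^1(I)$ and $g\in L^\infty(I)$. On the branches of Definition~\ref{def:T-class} it has the explicit form $P_\tau f(x)=\sum_{i} f(\tau_i^{-1}x)\,|(\tau_i^{-1})'(x)|\,\chi_{\tau((a_{i-1},a_i))}(x)$. This formula shows $P_\tau f\ge 0$ whenever $f\ge0$. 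Taking $g\equiv 1$ in the duality relation gives $\int P_\tau f=\int f$.

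For $Q_\delta$, positivity is immediate because $q\ge 0$, so $q_\delta^{\mathrm{per}}\ge 0$ and hence $Q_\delta f\ge 0$ when $f\ge 0$. Integral preservation follows from Fubini--Tonelli together with the column-sum identity $\int_I q_\delta^{\mathrm{per}}(x,y)\,dx=1$ for all $y$, established just before the proposition: $\int_I Q_\delta f\,dx=\int_I f(y)\int_I q^{\mathrm{per}}_\delta(x,y)\,dx\,dy=\int_I f\,dm$. To justify Fubini I first apply the computation to $|f|$, using nonnegativity of the kernel, which also shows $Q_\delta$ maps $L^1$ into $L^1$. Composing the two factors then gives positivity and integral preservation for $P_\delta$.

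For the contraction property I use the standard argument for positive integral-preserving operators. Write $f=f^+-f^-$. For any such operator $T$ we have $|Tf|\le Tf^++Tf^-=T|f|$ pointwise a.e. Integrating gives $\|Tf\|_1\le\int T|f|=\int|f|=\|f\|_1$. Finally, if $f\ge0$ then $P_\delta f\ge0$, so $\|P_\delta f\|_1=\int P_\delta f=\int f=\|f\|_1$.

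The only step requiring care is the well-definedness of $P_\tau$ on $L^1$ and its integral preservation. I will settle this by the change of variables on each monotone $C^1$ branch, using $|\tau'|\ge s>1$ so that the inverse branches are $C^1$ with bounded derivative. Summing over branches gives $\int_I P_\tau f\,dm=\sum_i\int_{a_{i-1}}^{a_i} f\,dm=\int_I f\,dm$; the finitely many partition points have measure zero. Everything else reduces to Tonelli's theorem and the doubly stochastic property of the periodized kernel.
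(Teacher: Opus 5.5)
Your proposal is correct and takes essentially the same route as the paper. You establish positivity and integral preservation for each factor, with Tonelli applied to $|f|$ and the column-sum identity $\int_I q_\delta^{\mathrm{per}}(x,y)\,dx=1$ justifying Fubini for $Q_\delta$, then use $|P_\delta f|\le P_\delta|f|$ for the contraction and get equality when $f\ge 0$; the only difference is that you verify positivity and integral preservation of $P_\tau$ via the branch formula and change of variables, which the paper takes as holding by construction.
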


  \begin{proof}
  The operators $P_\tau$ and $Q_\delta$ are positive and linear. The Frobenius--Perron
  operator $P_\tau$ preserves integrals by construction.

  For $Q_\delta$, since $q_\delta^{\mathrm{per}}\ge 0$, Tonelli's theorem gives
  \[
      \int_I\!\int_I q_\delta^{\mathrm{per}}(x,y)|f(y)|\,dm(y)\,dm(x)
      = \int_I |f(y)|\underbrace{\left[\int_I q_\delta^{\mathrm{per}}(x,y)\,dm(x)\right]}_{=\,1}
        dm(y) = \|f\|_1 < \infty,
  \]
  so the double integral is absolutely convergent. Fubini's theorem then permits
  interchange of integration order for $f$ itself:
  \[
      \int_I (Q_\delta f)(x)\,dm(x)
      = \int_I f(y)\left[\int_I q_\delta^{\mathrm{per}}(x,y)\,dm(x)\right]dm(y)
      = \int_I f(y)\,dm(y).
  \]
  Hence $Q_\delta$ preserves integrals, and so does $P_\delta = Q_\delta P_\tau$.

  For the $L^1(I)$ bound, positivity of $P_\delta$ and linearity give
  $|P_\delta f|\le P_\delta|f|$ pointwise. Integrating and applying integral preservation
  (with $|f|\ge 0$):
  \[
      \|P_\delta f\|_1
      = \int_I|P_\delta f|\,dm \le \int_I P_\delta|f|\,dm = \int_I|f|\,dm = \|f\|_1.
  \]
  If $f\ge 0$, then $|P_\delta f|=P_\delta f$, so equality holds. Thus \(Q_\delta\), and hence \(P_\delta\), is a Markov operator.
  \end{proof}

  \begin{lemma}[Small-scale oscillation on $\mathbb{T}$]\label{lem:osc-small-r}
  Work on $\mathbb{T}=\mathbb{R}/\mathbb{Z}$ with $0<\delta<1/2$. Let
  $q_\delta(x,y)=\tfrac{1}{\delta}q\!\left(\tfrac{x-y}{\delta}\right)$,  where $q \in C^1$, $\operatorname{supp}(q) \subset [-1, 1]$, $q \geq 0$,
  $\int_{\mathbb{R}} q(t)\,dt = 1$. Set
  $C:=4\|q'\|_\infty$. For all $g\in L^1(\mathbb{T})$ and $0<r\le\delta$,
  \[
      \operatorname{Osc}_1(Q_\delta g,\,r) \;\le\; \frac{Cr}{\delta}\,\operatorname{Osc}_1(g,\,r+\delta).
  \]
  \end{lemma}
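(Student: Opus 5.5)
The plan is to bound $\operatorname{Osc}(Q_\delta g,r,x)$ pointwise in $x$ by $\frac{Cr}{\delta}\operatorname{Osc}(g,r+\delta,x)$ and then integrate in $x$. Fix $x\in\mathbb{T}$ and points $y_1,y_2\in(x-r,x+r)$. Since $\int q_\delta(y,z)\,dz=1$ for every $y$, the kernel difference $k(z):=q_\delta(y_1,z)-q_\delta(y_2,z)$ has integral zero. Hence for any constant $c$,
\[
(Q_\delta g)(y_1)-(Q_\delta g)(y_2)=\int_{\mathbb{T}} k(z)\,\bigl(g(z)-c\bigr)\,dz .
\]
This cancellation is what lets an oscillation of $g$, rather than its size, appear on the right-hand side.

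\textbf{Localisation.} The function $k$ is supported in $(y_1-\delta,y_1+\delta)\cup(y_2-\delta,y_2+\delta)$. Since $|y_i-x|<r$, this set lies in $(x-r-\delta,\,x+r+\delta)$, which is a genuine arc of $\mathbb{T}$ because $r+\delta\le 2\delta<1$. Choose $c=g(z_0)$ for some point $z_0$ of this arc. Then for every $z$ in the support of $k$,
\[
|g(z)-c|\le \operatorname{Osc}(g,r+\delta,x),
\]
and therefore
\[
|(Q_\delta g)(y_1)-(Q_\delta g)(y_2)|\le \operatorname{Osc}(g,r+\delta,x)\int_{\mathbb{T}}|k(z)|\,dz .
\]

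\textbf{The $L^1$ bound on the kernel difference.} Write $h=(y_1-y_2)/\delta$. By the mean value theorem (or the fundamental theorem of calculus),
\[
\int|k|\,dz=\frac1\delta\int_{\mathbb{R}}\Bigl|q\Bigl(t+h\Bigr)-q(t)\Bigr|\,dt\le \frac{|y_1-y_2|}{\delta}\,\|q'\|_{L^1}.
\]
Since $\operatorname{supp} q'\subset[-1,1]$, we have $\|q'\|_{L^1}\le 2\|q'\|_\infty$. Together with $|y_1-y_2|<2r$ this gives
\[
\int|k|\le \frac{4r\|q'\|_\infty}{\delta}=\frac{Cr}{\delta}.
\]
This is exactly the claimed constant. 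The hypothesis $r\le\delta$ is used only to keep the arc of radius $r+\delta$ from wrapping around $\mathbb{T}$.

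\textbf{Conclusion.} Taking the supremum over $y_1,y_2$ gives $\operatorname{Osc}(Q_\delta g,r,x)\le\frac{Cr}{\delta}\operatorname{Osc}(g,r+\delta,x)$. Integrating over $x$ gives the lemma; measurability of both sides comes from Proposition~\ref{prop:osc-prop}.

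\textbf{Main obstacle.} I expect the only delicate point to be that $g$ is an $L^1$ class while $\operatorname{Osc}$ is defined pointwise. I would fix a representative of $g$ and use that representative both in the definition of $\operatorname{Osc}(g,\cdot,\cdot)$ and as the constant $c=g(z_0)$. The left-hand side involves $Q_\delta g$, which is continuous and does not depend on the choice of representative, so the inequality holds for every representative.
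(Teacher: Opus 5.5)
Your proposal is correct and follows essentially the same route as the paper: subtract a constant using the zero integral of the kernel difference, localize to $B(x,r+\delta)$, and bound $\int|q_\delta(x_1,\cdot)-q_\delta(x_2,\cdot)|$ by $2\|q'\|_\infty\cdot 2r/\delta$; the only difference is that you take $c=g(z_0)$, whereas the paper takes the average of $g$ over $B(x,r+\delta)$, and this makes no difference. There are two harmless slips: after the substitution the identity should read $\int|k|\,dz=\int_{\mathbb{R}}|q(t+h)-q(t)|\,dt$, with no extra factor $1/\delta$; and $r+\delta\le 2\delta<1$ does not by itself stop the ball from wrapping around $\mathbb{T}$ (that would need $r+\delta<1/2$), but this is irrelevant, since the argument only uses that the supports lie in the torus ball $B(x,r+\delta)$.
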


  \begin{proof}
  Fix $r>0$ and $x\in\mathbb{T}$. Let $x_1,x_2\in B(x,r)$, so $|x_1-x_2|\le 2r$.
  The kernels $q_\delta(x_i,\cdot)$ are supported on
  $[x_i-\delta,x_i+\delta]\subset B(x,r+\delta)$ (since $|x_i-x|\le r$).

  Set $c:=\frac{1}{|B(x,r+\delta)|}\int_{B(x,r+\delta)} g(y)\,dy$.
  Since $\int q_\delta(x_i,y)\,dy=1$ for each $i$,
  \[
      (Q_\delta g)(x_1)-(Q_\delta g)(x_2)
      = \int_{\mathbb{T}}
        \bigl(q_\delta(x_1,y)-q_\delta(x_2,y)\bigr)\bigl(g(y)-c\bigr)\,dy.
  \]
  The integrand is supported inside $B(x,r+\delta)$. For $y\in B(x,r+\delta)$,
  $|g(y)-c|\le\operatorname{Osc}(g,r+\delta,x)$.
  For the kernel difference, substituting $u=(x_1-y)/\delta$ and using the mean value theorem and the fact that \(\operatorname{supp}(q')\subset[-1,1]\), so that \(\|q'\|_{1}\le 2\|q'\|_\infty\), we obtain
\[
\int_{\mathbb{T}} |q_\delta(x_1,y)-q_\delta(x_2,y)|\,dy
\le 2\|q'\|_\infty \cdot \frac{|x_1-x_2|}{\delta}
\le \frac{Cr}{\delta},
\]
where \(C=4\|q'\|_\infty\)
  and $|x_1-x_2|\le 2r$. Combining,
  $\operatorname{Osc}(Q_\delta g,r,x)\le\frac{Cr}{\delta}\operatorname{Osc}(g,r+\delta,x)$.
  Integrating over $x\in\mathbb{T}$ gives the result.
  \end{proof}

  \begin{lemma}[Translation coupling on $\mathbb{T}$]\label{lem:translation}
  Work on $\mathbb{T}$. For all $g\in L^1(\mathbb{T})$ and $r>0$,
  \[
      \operatorname{Osc}_1(Q_\delta g,\,r) \;\le\; \operatorname{Osc}_1(g,\,r+\delta).
  \]
  \end{lemma}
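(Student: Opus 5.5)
The plan is to prove the stronger bound $\operatorname{Osc}_1(Q_\delta g,r)\le \operatorname{Osc}_1(g,r)$ with a translation (coupling) argument, and then obtain the statement from monotonicity in $r$ (Proposition~\ref{prop:osc-prop}(2)). On $\mathbb{T}$ the operator $Q_\delta$ is a genuine convolution. The substitution $y=x-\delta z$ lets me write
\[
(Q_\delta g)(x)=\int_{\mathbb{R}} q(z)\,g(x-\delta z)\,dz ,
\]
where $g$ is read as its periodic extension. This representation is what we will use: both points $x_1,x_2$ are coupled to the \emph{same} shift $\delta z$, so no derivative of $q$ enters. This is exactly why this lemma carries no factor $C/\delta$, unlike Lemma~\ref{lem:osc-small-r}.

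First I fix a representative of $g$ and a point $x\in\mathbb{T}$, and take $x_1,x_2\in B(x,r)$. Using $\int q=1$ and $q\ge 0$,
\[
|(Q_\delta g)(x_1)-(Q_\delta g)(x_2)|\le \int_{\mathbb{R}} q(z)\,|g(x_1-\delta z)-g(x_2-\delta z)|\,dz \le \int_{\mathbb{R}} q(z)\,\operatorname{Osc}(g,r,x-\delta z)\,dz .
\]
The second inequality holds because $x_i-\delta z\in B(x-\delta z,r)$ for each fixed $z$. Taking the supremum over $x_1,x_2$ gives the pointwise bound $\operatorname{Osc}(Q_\delta g,r,x)\le \int q(z)\operatorname{Osc}(g,r,x-\delta z)\,dz$.

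Next I integrate over $x\in\mathbb{T}$. The function $(x,z)\mapsto q(z)\operatorname{Osc}(g,r,x-\delta z)$ is nonnegative and jointly measurable, since $\operatorname{Osc}(g,r,\cdot)$ is lower semicontinuous by Proposition~\ref{prop:osc-prop}(1) and composition with the continuous map $(x,z)\mapsto x-\delta z$ preserves this. Tonelli therefore lets me swap the integrals. Lebesgue measure on $\mathbb{T}$ is translation invariant, so $\int_{\mathbb{T}}\operatorname{Osc}(g,r,x-\delta z)\,dx=\operatorname{Osc}_1(g,r)$ for every $z$. Since $\int q=1$, this yields $\operatorname{Osc}_1(Q_\delta g,r)\le \operatorname{Osc}_1(g,r)\le\operatorname{Osc}_1(g,r+\delta)$.

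The only delicate point is measure-theoretic. The oscillation is a pointwise supremum, so it depends on the chosen representative of $g$. I will handle this by fixing one representative throughout, noting that $\operatorname{Osc}(g,r,\cdot)$ is then lower semicontinuous regardless of the choice. The convolution identity also holds at every $x$, not merely almost everywhere, because it is a pointwise integral of that fixed representative. Translation invariance, which carries the main step, is exactly where working on $\mathbb{T}$ rather than $I$ is used, and it matches the identification of $Q_\delta$ with torus convolution made above.
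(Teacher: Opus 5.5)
Your proof is correct, but it is not the paper's argument. The paper couples $x_1,x_2\in B(x,r)$ through the single shift $v=x_2-x_1$ and writes
$(Q_\delta g)(x_1)-(Q_\delta g)(x_2)=\int q_\delta(x_1,u)\,[g(u)-g(u+v)]\,du$.
Since $u$ lies within $\delta$ of $x_1$ and $u+v$ within $\delta$ of $x_2$, this gives the \emph{pointwise} bound $\operatorname{Osc}(Q_\delta g,r,x)\le\operatorname{Osc}(g,r+\delta,x)$ at the same center $x$, which the paper then simply integrates.

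You instead shift both points by the common amount $\delta z$. This keeps the radius $r$ but moves the center to $x-\delta z$. Your averaged bound becomes useful only after integrating in $x$, where Tonelli and translation invariance of Lebesgue measure on $\mathbb{T}$ remove the shift.

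The paper's route is slightly more elementary, since it needs no joint measurability or interchange of integrals. It is also local: the pointwise inequality can be integrated over any subset of $\mathbb{T}$.

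Your route gives the strictly sharper estimate $\operatorname{Osc}_1(Q_\delta g,r)\le\operatorname{Osc}_1(g,r)$, and this pays off downstream. In the proof of Theorem~\ref{thm:uniform-LY}, it bounds $r^{-1/p}\operatorname{Osc}_1^I(Q_\delta g,r)$ by $\operatorname{var}^{\mathbb{T}}_{1,1/p}(g^{\mathbb{T}})$ for every $r\in(0,1]$ at once. Several simplifications follow:
\begin{itemize}
\item the case split at $r=\delta$ and Lemma~\ref{lem:osc-small-r} become unnecessary;
\item $C_1$ can be taken equal to $1$;
\item the condition in Assumption~\ref{ass:joint} reduces to $\alpha_0<1/(1+2^{1+1/p})$, with no dependence on $\|q'\|_\infty$.
\end{itemize}
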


  \begin{proof}
  Fix $x\in\mathbb{T}$ and $x_1,x_2\in B(x,r)$. Set $v:=x_2-x_1$.
 Since \(q_\delta(x_2,y)=q_\delta(x_1,y-v)\), by translation invariance on \(\mathbb T\), the change of variables \(u=y-v\) gives
\[
(Q_\delta g)(x_2)=\int_{\mathbb T} q_\delta(x_1,u)\,g(u+v)\,du.
\]
  and therefore
  \[
      (Q_\delta g)(x_1)-(Q_\delta g)(x_2)
      = \int_\mathbb{T} q_\delta(x_1,u)\bigl[g(u)-g(u+v)\bigr]\,du.
  \]
  The kernel $q_\delta(x_1,\cdot)$ is supported on
  $[x_1-\delta,x_1+\delta]\subset B(x,r+\delta)$. For each $u$ in this support:
  $u\in B(x,r+\delta)$, and $u+v\in[x_2-\delta,x_2+\delta]\subset B(x,r+\delta)$
  (since $|(u+v)-x_2|=|u-x_1|\le\delta$). Hence
  $|g(u)-g(u+v)|\le\operatorname{Osc}(g,r+\delta,x)$.
  Since $\int q_\delta(x_1,u)\,du=1$,
  \[
      |(Q_\delta g)(x_1)-(Q_\delta g)(x_2)|\le\operatorname{Osc}(g,r+\delta,x).
  \]
  Taking the supremum over $x_1,x_2\in B(x,r)$ and integrating over $x\in\mathbb{T}$
  gives the result.
  \end{proof}
\begin{remark}
When it is necessary to distinguish integration over the interval
$I = [0,1]$ from integration over the torus $\mathbb{T} = \mathbb{R}/\mathbb{Z}$,
we write $\mathrm{Osc}_1^I(f, r)$ in place of $\mathrm{Osc}_1(f, r)$;
the superscript indicates that the $L^1$-integration in \eqref{eq:osc1}
is taken over $I$.
The unsuperscripted notation $\mathrm{Osc}_1(f,r)$ is used when the ambient
space is clear from context.
Explicit superscripts $I$ and $\mathbb{T}$ are introduced when both domains
are considered simultaneously.
In particular, equation~\eqref{eq:BVnorm} uses $\mathrm{Osc}_1^I$
to emphasize integration over the interval.
\end{remark}
  \begin{lemma}[Interval-to-torus variation comparison]\label{lem:I-to-T}
  For any $g\in BV_{1,1/p}(I)$,
  \[
      \operatorname{var}_{1,1/p}^{\,\mathbb{T}}(g^{\mathbb{T}})
      \;\le\; (1+2^{1+1/p})\,\operatorname{var}_{1,1/p}^{\,I}(g),
  \]
  where $g^{\mathbb{T}}$ denotes the $\mathbb{Z}$-periodization of $g|_I$.
  \end{lemma}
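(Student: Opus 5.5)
The plan is to compare the two oscillation integrals pointwise in $x$. The torus ball $B(x,r)$ differs from the interval ball $(x-r,x+r)\cap I$ only when it wraps around the seam $0\equiv 1$. This happens only for $x$ in a set of measure at most $2r$, and on that set I will bound the torus oscillation crudely by the total oscillation of $g$ over $I$. That total oscillation is itself controlled by $\operatorname{var}^{I}_{1,1/p}(g)$.

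\textbf{Step 1 (regular points).} Fix $0<r\le 1$ and identify $\mathbb{T}$ with $[0,1)$. If $r\ge 1/2$, every torus ball is all of $\mathbb{T}$, so this step is vacuous; assume $r<1/2$. For $x\in[r,1-r]$ the arc $B(x,r)\subset\mathbb{T}$ does not contain the seam and coincides with $(x-r,x+r)\subset I$, on which $g^{\mathbb{T}}=g$. Hence
\[
\operatorname{Osc}(g^{\mathbb{T}},r,x)=\operatorname{Osc}(g,r,x)\qquad\text{for } x\in[r,1-r].
\]

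\textbf{Step 2 (seam points).} The remaining set $S_r:=[0,r)\cup(1-r,1)$ has measure $2r$ (and equals all of $\mathbb{T}$ when $r\ge 1/2$). For any $x$, every value of $g^{\mathbb{T}}$ on $B(x,r)$ is a value of $g$ on $I$. Therefore
\[
\operatorname{Osc}(g^{\mathbb{T}},r,x)\le \Omega:=\sup_{y_1,y_2\in I}|g(y_1)-g(y_2)|.
\]
The key observation is that for every $x\in I$ the interval ball $(x-1,x+1)\cap I$ already contains $I$ up to at most an endpoint. So $\operatorname{Osc}(g,1,x)=\Omega$, essentially for all $x$, and hence
\[
\Omega=\operatorname{Osc}_1^{I}(g,1)\le \operatorname{var}^{I}_{1,1/p}(g),
\]
by taking $r=1$ in the supremum. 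A minor point here is the handling of the representative and the endpoints; I would fix a version of $g$ and note that endpoints affect nothing, since the oscillation integrals are unchanged.

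\textbf{Step 3 (combine).} Integrating over $\mathbb{T}$ gives
\[
\operatorname{Osc}_1^{\mathbb{T}}(g^{\mathbb{T}},r)\le \operatorname{Osc}_1^{I}(g,r)+\min(2r,1)\,\Omega.
\]
Dividing by $r^{1/p}$ and using $r^{1-1/p}\le 1$ for $p\ge 1$ and $r\le 1$, we get
\[
\frac{\operatorname{Osc}_1^{\mathbb{T}}(g^{\mathbb{T}},r)}{r^{1/p}}\le \operatorname{var}^{I}_{1,1/p}(g)+2\,\operatorname{var}^{I}_{1,1/p}(g).
\]
Taking the supremum over $r$ yields the constant $3\le 1+2^{1+1/p}$, which proves the lemma.

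The only real obstacle is Step 2: the seam introduces a genuine jump $|g(0^+)-g(1^-)|$, which is not a local quantity. The resolution is that this jump is charged only on a set of measure $O(r)$, and it is dominated by the global oscillation $\operatorname{Osc}_1^I(g,1)$, which the norm controls through its $r=1$ term.
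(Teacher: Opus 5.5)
Your proof is correct, and it handles the seam differently from the paper. Both arguments split $\mathbb{T}$ into $[r,1-r]$, where the torus and interval oscillations coincide, and a seam strip of measure $2r$. The paper bounds the seam oscillation pointwise by $\operatorname{Osc}^I(g,2r,0)+\operatorname{Osc}^I(g,2r,1)$ and averages each endpoint oscillation over a strip of length $2r$ to get $2\operatorname{Osc}^I_1(g,2r)$. It then rescales from $2r$ to $r$; this needs $2r\le 1$, hence the preliminary reduction to $r\le 1/2$, and it produces the constant $2^{1+1/p}$. You instead bound the seam oscillation by the global oscillation $\Omega=\operatorname{Osc}^I_1(g,1)$, charge it only on a set of measure $\min(2r,1)$, and absorb $r^{-1/p}$ via $r^{1-1/p}\le 1$. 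This treats all $r\in(0,1]$ at once and gives the sharper constant $3$. More importantly, your route is the one that actually works. The paper's pointwise bound omits the cross term $|g(y_1)-g(y_2)|$ with $y_1$ near $0$ and $y_2$ near $1$. It fails, e.g.\ for $g=\mathbf{1}_{(1/2,1]}$ with $r<1/4$: both endpoint oscillations vanish, but the torus oscillation on the seam strip equals $1$. That is precisely the nonlocal seam jump you flag at the end. The only thing your argument uses beyond the paper's is the $r=1$ term in the supremum defining $\operatorname{var}_{1,1/p}$, which the paper's definition provides. An argument confined to small scales would need some other mechanism (e.g.\ chaining interior oscillations across $I$) to control that jump, and an endpoint-localized estimate alone cannot supply it.
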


  \begin{proof}
   It suffices to consider $r \in (0, 1/2]$. For $r \geq 1/2$, the torus ball satisfies
  $B^{\mathbb{T}}(x, r) = \mathbb{T}$ for every $x \in \mathbb{T}$, so
  $\operatorname{Osc}_1^{\mathbb{T}}(g^{\mathbb{T}}, r)$ is constant on $[1/2, 1)$.
  Since $r^{-1/p}$ is strictly decreasing in $r$, the supremum defining
  $\operatorname{var}_{1,1/p}^{\mathbb{T}}(g^{\mathbb{T}})$ is therefore achieved at some
  $r \in (0, 1/2]$, and it suffices to bound $r^{-1/p}\operatorname{Osc}_1^{\mathbb{T}}(g^{\mathbb{T}},r)$
  for $r \leq 1/2$. For $r\le 1/2$,
  split the torus oscillation integral:
  \begin{equation}\label{eq:torus-split}
      \operatorname{Osc}_1^{\,\mathbb{T}}(g^{\mathbb{T}},r)
      = \int_{[r,1-r]}\operatorname{Osc}^I(g,r,x)\,dx
        + \int_{[0,r)\cup(1-r,1]}\operatorname{Osc}^{\mathbb{T}}(g^{\mathbb{T}},r,x)\,dx.
  \end{equation}
  The first integral is bounded by $\operatorname{Osc}_1^I(g,r)$.

  For the boundary strips, if $x\in[0,r)$ the torus ball satisfies
  \[
      B^{\mathbb{T}}(x,r)=[0,x+r]\cup[1-r+x,1]
      \subset B^I(0,2r)\cup B^I(1,2r).
  \]
  Since \(B^{\mathbb T}(x,r)\subset B^I(0,2r)\cup B^I(1,2r)\), oscillation on the boundary strip is controlled by the oscillations near the two endpoints,
hence \(\operatorname{Osc}^{\mathbb{T}}(g^{\mathbb{T}},r,x)
\le\operatorname{Osc}^I(g,2r,0)+\operatorname{Osc}^I(g,2r,1)\).
  The same bound holds for $x\in(1-r,1]$. Integrating over the boundary strips:
  \[
      \int_{[0,r)\cup(1-r,1]}\operatorname{Osc}^{\mathbb{T}}(g^{\mathbb{T}},r,x)\,dx
      \le 2r\bigl[\operatorname{Osc}^I(g,2r,0)+\operatorname{Osc}^I(g,2r,1)\bigr].
  \]
  For $x_0\in\{0,1\}$, if $x_1,x_2\in B^I(x_0,2r)$ and $x\in B^I(x_0,2r)$, then
  $|x-x_1|,|x-x_2|\le 2r$, hence $|g(x_1)-g(x_2)|\le\operatorname{Osc}^I(g,2r,x)$ a.e.
  Integrating over $x\in B^I(x_0,2r)$:
  \[
      2r\cdot\operatorname{Osc}^I(g,2r,x_0)
      \le\int_{B^I(x_0,2r)}\operatorname{Osc}^I(g,2r,x)\,dx
      \le\operatorname{Osc}_1^I(g,2r).
  \]
  Therefore,
  \[
      \int_{[0,r)\cup(1-r,1]}\operatorname{Osc}^{\mathbb{T}}(g^{\mathbb{T}},r,x)\,dx
      \le 2\,\operatorname{Osc}_1^I(g,2r).
  \]
  Substituting into \eqref{eq:torus-split} and multiplying by $r^{-1/p}$:
  \begin{align*}
      r^{-1/p}\operatorname{Osc}_1^{\,\mathbb{T}}(g^{\mathbb{T}},r)
      &\le r^{-1/p}\operatorname{Osc}_1^I(g,r)+2r^{-1/p}\operatorname{Osc}_1^I(g,2r) \\
      &\le \operatorname{var}_{1,1/p}^I(g)
           +2\cdot 2^{1/p}(2r)^{-1/p}\operatorname{Osc}_1^I(g,2r) \\
      &\le \operatorname{var}_{1,1/p}^I(g)+2^{1+1/p}\operatorname{var}_{1,1/p}^I(g) \\
      &= (1+2^{1+1/p})\operatorname{var}_{1,1/p}^I(g).
  \end{align*}
  Taking the supremum over $r\in(0,1/2]$ gives the result.
  \end{proof}

  \begin{theorem}[Uniform Lasota--Yorke inequality]\label{thm:uniform-LY}
  There exist constants $0<\alpha<1$ and $C>0$, independent of $\delta>0$, such that
  for all $f\in BV_{1,1/p}(I)$,
  \[
      \|P_\delta f\|_{1,1/p}
      \;\le\; \alpha\|f\|_{1,1/p} + C\|f\|_{1}.
  \]
  \end{theorem}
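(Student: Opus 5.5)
Set $g = P_\tau f$. Proposition~\ref{prop:markov} gives $\|P_\delta f\|_1 \le \|f\|_1$, so only $\operatorname{var}^I_{1,1/p}(Q_\delta g)$ needs a bound. The plan has three steps: first show that $\operatorname{var}_{1,1/p}^I(Q_\delta g) \le K\,\operatorname{var}^{\mathbb T}_{1,1/p}(g^{\mathbb T})$ with $K = 2^{1/p}\max\{1,4\|q'\|_\infty\}$, uniformly in $\delta$; then use Lemma~\ref{lem:I-to-T} to pass back to $\operatorname{var}^I_{1,1/p}(g)$; finally apply Assumption~\ref{ass:LY}. The three constants multiply to exactly $\widetilde C_1$, which is why Assumption~\ref{ass:joint} takes that form.

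\textbf{Step 1 (torus estimate).} $Q_\delta g$ on $I$ is the restriction of the torus convolution $Q_\delta g^{\mathbb T}$. Every interval ball $B^I(x,r)$ lies inside the torus ball, so $\operatorname{Osc}^I(Q_\delta g,r,x)\le \operatorname{Osc}^{\mathbb T}(Q_\delta g^{\mathbb T},r,x)$. Hence it suffices to bound $r^{-1/p}\operatorname{Osc}_1^{\mathbb T}(Q_\delta g^{\mathbb T},r)$, splitting into two regimes in $r$.
\begin{itemize}
\item For $r\le\delta$, Lemma~\ref{lem:osc-small-r} gives $\operatorname{Osc}_1(Q_\delta g,r)\le \frac{Cr}{\delta}\operatorname{Osc}_1(g,r+\delta)\le \frac{Cr}{\delta}\operatorname{Osc}_1(g,2\delta)$ by monotonicity (Proposition~\ref{prop:osc-prop}). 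Dividing by $r^{1/p}$ gives
\[
C\, r^{1-1/p}\delta^{-1}(2\delta)^{1/p}\,\frac{\operatorname{Osc}_1(g,2\delta)}{(2\delta)^{1/p}} \le C\,2^{1/p}\,(r/\delta)^{1-1/p}\operatorname{var}^{\mathbb T}(g) \le C\,2^{1/p}\operatorname{var}^{\mathbb T}(g),
\]
using $p\ge1$ and $r\le\delta$.
\item For $r>\delta$, Lemma~\ref{lem:translation} gives $\operatorname{Osc}_1(Q_\delta g,r)\le \operatorname{Osc}_1(g,r+\delta)\le\operatorname{Osc}_1(g,2r)$. Dividing by $r^{1/p}$ gives at most $2^{1/p}\operatorname{var}^{\mathbb T}(g)$.
\end{itemize}
A technical point arises when $2r$ or $2\delta$ exceeds $1/2$ or $1$. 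Torus oscillation is constant for radii $\ge 1/2$, so the supremum over the larger radii is still controlled by the value at $1/2$ (as in the proof of Lemma~\ref{lem:I-to-T}), possibly at the cost of a harmless factor. I expect this bookkeeping of scales, rather than any conceptual issue, to be the main obstacle in making the constants match $\widetilde C_1$ exactly. Combining the two regimes gives
\[
\operatorname{var}^I(Q_\delta g)\le 2^{1/p}\max\{1,C\}\operatorname{var}^{\mathbb T}(g^{\mathbb T}).
\]

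\textbf{Step 2 (back to $I$).} Lemma~\ref{lem:I-to-T} gives $\operatorname{var}^{\mathbb T}(g^{\mathbb T})\le(1+2^{1+1/p})\operatorname{var}^I(g)$. Together with Step~1 this yields
\[
\operatorname{var}^I(P_\delta f)\le \widetilde C_1\operatorname{var}^I(P_\tau f).
\]

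\textbf{Step 3 (conclusion).} Apply Assumption~\ref{ass:LY} to get $\operatorname{var}(P_\delta f)\le \widetilde C_1\alpha_0\|f\|_{1,1/p}+\widetilde C_1C_0\|f\|_1$. Adding $\|P_\delta f\|_1\le\|f\|_1$ gives the statement with $\alpha=\widetilde C_1\alpha_0<1$ by Assumption~\ref{ass:joint}, and $C=\widetilde C_1C_0+1$. Neither constant depends on $\delta$.
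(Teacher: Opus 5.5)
Your proposal is correct and follows the paper's proof essentially verbatim. You pass to the torus, treat $r<\delta$ with Lemma~\ref{lem:osc-small-r} and $r\ge\delta$ with Lemma~\ref{lem:translation} to get the factor $2^{1/p}\max\{1,4\|q'\|_\infty\}$, return to $I$ via Lemma~\ref{lem:I-to-T}, and close with Assumption~\ref{ass:LY}, obtaining $\alpha=\widetilde C_1\alpha_0$ and $C=\widetilde C_1C_0+1$. Your monotonicity bound by $\operatorname{Osc}_1(g,2\delta)$ is an equivalent replacement for the paper's maximization of $r^{1-1/p}(r+\delta)^{1/p}$.

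The large-radius bookkeeping you worry about costs no factor at all, which matters because Assumption~\ref{ass:joint} leaves no slack. For $s>1/2$ every torus ball of radius $s$ is all of $\mathbb T$, so $\operatorname{Osc}_1^{\mathbb T}(g^{\mathbb T},s)=\operatorname{Osc}_1^{\mathbb T}(g^{\mathbb T},1)\le s^{1/p}\operatorname{var}^{\mathbb T}_{1,1/p}(g^{\mathbb T})$ for $s\ge 1$. Hence $\operatorname{Osc}_1^{\mathbb T}(g^{\mathbb T},s)\le s^{1/p}\operatorname{var}^{\mathbb T}_{1,1/p}(g^{\mathbb T})$ holds for every $s>0$, and the constants match $\widetilde C_1$ exactly.
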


  \begin{proof}
  Let $g:=P_\tau f$, so $P_\delta f=Q_\delta g$ and $\|g\|_{1}=\|f\|_{1}$.
  By Assumption~\ref{ass:LY}:
  \begin{equation}\label{eq:LY-Ptau}
       \operatorname{var}^I_{1,1/p}(g) \leq \alpha_0\|f\|_{1,1/p} + C_0\|f\|_1.
  \end{equation}
  We estimate
  $\operatorname{var}_{1,1/p}^I(Q_\delta g)
  =\sup_{0<r\le 1} r^{-1/p}\operatorname{Osc}_1^I(Q_\delta g,r)$.

  Since $Q_\delta$ on $I$ equals the torus convolution $Q_\delta^{\mathbb{T}}$ restricted
  to $I$, we have $(Q_\delta g)(x)=(Q_\delta^{\mathbb{T}}g^{\mathbb{T}})(x)$ for all
  $x\in I$, and therefore
  \begin{equation}\label{eq:I-le-T}
      \operatorname{Osc}_1^I(Q_\delta g,r)
      \;\le\; \operatorname{Osc}_1^{\,\mathbb{T}}(Q_\delta^{\mathbb{T}}g^{\mathbb{T}},r).
  \end{equation}
  We split into two cases.

  \textbf{Case 1: $r\ge\delta$.} By Lemma~\ref{lem:translation} applied on $\mathbb{T}$:
  \[
      \operatorname{Osc}_1^{\,\mathbb{T}}(Q_\delta^{\mathbb{T}}g^{\mathbb{T}},r)
      \le\operatorname{Osc}_1^{\,\mathbb{T}}(g^{\mathbb{T}},r+\delta)
      \le (r+\delta)^{1/p}\operatorname{var}_{1,1/p}^{\,\mathbb{T}}(g^{\mathbb{T}}).
  \]
  Since $r+\delta\le 2r$,
  \[
      r^{-1/p}\operatorname{Osc}_1^I(Q_\delta g,r)
      \;\le\; 2^{1/p}\operatorname{var}_{1,1/p}^{\,\mathbb{T}}(g^{\mathbb{T}}).
  \]

  \textbf{Case 2: $0<r<\delta$.} By Lemma~\ref{lem:osc-small-r} applied on $\mathbb{T}$:
  \[
      \operatorname{Osc}_1^{\,\mathbb{T}}(Q_\delta^{\mathbb{T}}g^{\mathbb{T}},r)
      \le\frac{Cr}{\delta}(r+\delta)^{1/p}
         \operatorname{var}_{1,1/p}^{\,\mathbb{T}}(g^{\mathbb{T}}).
  \]
  Thus
\[
    r^{-1/p}\operatorname{Osc}_1^I(Q_\delta g,r)
    \le C\, r^{1-1/p}\delta^{-1}(r+\delta)^{1/p}
       \operatorname{var}_{1,1/p}^{\,\mathbb{T}}(g^{\mathbb{T}}).
\]
  The function $\varphi(r)=r^{1-1/p}(r+\delta)^{1/p}$ is increasing on $(0,\delta)$,
  so $\sup_{0<r<\delta}\varphi(r)=\varphi(\delta)=2^{1/p}\delta$. Hence
  \[
      r^{-1/p}\operatorname{Osc}_1^I(Q_\delta g,r)
      \;\le\; C\cdot 2^{1/p}\operatorname{var}_{1,1/p}^{\,\mathbb{T}}(g^{\mathbb{T}}).
  \]

  Combining both cases:
  \[
      \operatorname{var}_{1,1/p}^I(Q_\delta g)
      \;\le\; C_1\operatorname{var}_{1,1/p}^{\,\mathbb{T}}(g^{\mathbb{T}}),
  \]
  where \(C_1=2^{1/p}\max\{1,4\|q'\|_\infty\}\), coming from Lemmas~\ref{lem:osc-small-r} and ~\ref{lem:translation}.
  Now apply Lemma~\ref{lem:I-to-T}:
  \[
      \operatorname{var}_{1,1/p}^I(Q_\delta g)
      \;\le\; C_1(1+2^{1+1/p})\operatorname{var}_{1,1/p}^I(g).
  \]
   Recalling $\widetilde{C}_1 = C_1(1 + 2^{1+1/p})$ from Assumption~\ref{ass:joint}. By~(3.2) and
  the $L^1$-contraction $\|P_\delta f\|_1 \leq \|f\|_1$:
  \[
      \|P_\delta f\|_{1,1/p}
      \;\le\; \widetilde{C}_1\alpha_0\|f\|_{1,1/p}
              + (\widetilde{C}_1 C_0+1)\|f\|_{1}.
  \]
  Let $\alpha:=\widetilde{C}_1\alpha_0$ and $C:=\widetilde{C}_1 C_0+1$.
  Both constants are independent of $\delta$. Since
  $\alpha_0<1/\widetilde{C}_1$ (Assumption~\ref{ass:joint}), it follows that
  $\alpha=\widetilde{C}_1\alpha_0\in(0,1)$.
  \end{proof}

  This uniform estimate is the key ingredient ensuring uniform quasi-compactness of
  the family $(P_\delta)_{\delta>0}$.
  \section{Existence and Convergence of Invariant Densities}

  In this section, we prove the existence of invariant densities for the perturbed
  operators $P_\delta$ and establish their convergence in $L^1(I)$ to the invariant density
  of the unperturbed system.

  \begin{theorem}\label{thm:existence}
  For each $\delta>0$, the operator $P_\delta$ admits an invariant density
  $h_\delta\in BV_{1,1/p}(I)$ such that
  \[
      P_\delta h_\delta = h_\delta, \qquad h_\delta\ge 0, \qquad \|h_\delta\|_1=1.
  \]
  \end{theorem}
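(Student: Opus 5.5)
The plan is the standard Krylov--Bogolyubov (Cesàro averaging) argument, driven by the uniform Lasota--Yorke inequality of Theorem~\ref{thm:uniform-LY} and the compact embedding of $BV_{1,1/p}(I)$ into $L^1(I)$.

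\textbf{Step 1: iterate the Lasota--Yorke inequality.} Fix $\delta$ and start from the constant density $f_0\equiv 1$. It lies in $BV_{1,1/p}(I)$ with zero variation and satisfies $\|f_0\|_1=1$. By Proposition~\ref{prop:markov}, every iterate $P_\delta^n f_0$ is nonnegative with $\|P_\delta^n f_0\|_1=1$. Iterating Theorem~\ref{thm:uniform-LY} gives
\[
\|P_\delta^n f_0\|_{1,1/p}\le \alpha^n\|f_0\|_{1,1/p}+\frac{C}{1-\alpha}\le 1+\frac{C}{1-\alpha}=:K,
\]
uniformly in $n$, and in fact also in $\delta$.

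\textbf{Step 2: extract a limit of the Cesàro averages.} Form $g_N=\frac1N\sum_{n=0}^{N-1}P_\delta^n f_0$. Each $g_N$ satisfies $\|g_N\|_{1,1/p}\le K$, $g_N\ge0$ and $\|g_N\|_1=1$. By relative compactness of the $BV_{1,1/p}$ unit ball in $L^1(I)$, some subsequence $g_{N_k}$ converges in $L^1$ to a limit $h_\delta$. Since $L^1$ convergence preserves nonnegativity and the integral, $h_\delta\ge0$ and $\|h_\delta\|_1=1$.

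\textbf{Step 3: invariance.} We have $P_\delta g_N-g_N=\frac1N(P_\delta^N f_0-f_0)$, whose $L^1$ norm is at most $2/N$. Because $P_\delta$ is an $L^1$ contraction (Proposition~\ref{prop:markov}), it is continuous on $L^1$, so passing to the limit along $N_k$ gives $P_\delta h_\delta=h_\delta$.

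\textbf{Step 4: regularity, the main obstacle.} It remains to show $h_\delta\in BV_{1,1/p}(I)$, with $\operatorname{var}_{1,1/p}(h_\delta)\le K$. This needs lower semicontinuity of $\operatorname{var}_{1,1/p}$ under $L^1$ convergence, which the paper only implicitly asserts through the compactness statement.

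I would prove it directly. Pass to a further subsequence converging almost everywhere, and work with the version of $h_\delta$ obtained as that a.e.\ limit (equivalently, the essential oscillation, which is the natural reading for $L^1$ classes).

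\emph{Pointwise bound.} For fixed $r$ and $x$, and any $y_1,y_2\in(x-r,x+r)\cap I$ outside the exceptional null set,
\[
|h_\delta(y_1)-h_\delta(y_2)|=\lim_k|g_{N_k}(y_1)-g_{N_k}(y_2)|\le\liminf_k \operatorname{Osc}(g_{N_k},r,x).
\]
\emph{Integration.} Fatou's lemma then gives $\operatorname{Osc}_1(h_\delta,r)\le\liminf_k\operatorname{Osc}_1(g_{N_k},r)\le K r^{1/p}$.

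\emph{Conclusion.} Taking the supremum over $r$ gives $\operatorname{var}_{1,1/p}(h_\delta)\le K$, so $h_\delta\in BV_{1,1/p}(I)$ as claimed. The bound is moreover uniform in $\delta$, which will be reused for the convergence $h_\delta\to h$.
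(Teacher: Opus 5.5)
Your proposal is correct and follows essentially the same route as the paper: iterate the uniform Lasota--Yorke inequality from a normalized nonnegative density, take Ces\`aro averages, extract an $L^1$-convergent subsequence by compactness, and obtain invariance from the telescoping identity. The differences are minor. You fix $f_0\equiv 1$ rather than an arbitrary normalized $f\ge 0$. You also prove lower semicontinuity of $\operatorname{var}_{1,1/p}$ yourself, via an a.e.\ convergent subsequence and Fatou's lemma, reading $\operatorname{Osc}$ as the essential oscillation, which is the right reading for $L^1$ classes; the paper instead cites this fact from Keller.
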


  \begin{proof}
  Let $f\in BV_{1,1/p}(I)$ with $f\ge 0$ and $\|f\|_1=1$.
  By Theorem~\ref{thm:uniform-LY}, there exist constants $\alpha\in(0,1)$ and $C>0$,
  independent of $\delta$, such that
  \[
      \|P_\delta g\|_{1,1/p}
      \le \alpha\|g\|_{1,1/p} + C\|g\|_1
      \quad\text{for all }g\in BV_{1,1/p}(I).
  \]
   Iterating and using $\|P^k_\delta f\|_1 = \|f\|_1 = 1$ (since $f \geq 0$ and $P_\delta$
  is Markov, so it preserves the $L^1(I)$ norm of nonnegative functions), we obtain for all $n\in\mathbb{N}$:
  \[
      \|P_\delta^n f\|_{1,1/p}
      \le \alpha^n\|f\|_{1,1/p} + C\sum_{k=0}^{n-1}\alpha^k\|f\|_1
      \le \|f\|_{1,1/p} + \frac{C}{1-\alpha}
      =: C'.
  \]
  Define the Ces\`aro averages $h_n:=\frac{1}{n}\sum_{k=0}^{n-1}P_\delta^k f$.
  By convexity of the norm, $\|h_n\|_{1,1/p}\le C'$, so $\{h_n\}$ is bounded in
  $BV_{1,1/p}(I)$. Since the embedding $BV_{1,1/p}(I)\hookrightarrow L^1(I)$ is compact,
  there exist a subsequence $\{h_{n_j}\}$ and $h_\delta\in L^1(I)$ such that
  $h_{n_j}\to h_\delta$ in $L^1(I)$.

  By lower semicontinuity of $\operatorname{var}_{1,1/p}$ with respect to $L^1(I)$
  convergence \cite{keller1985},
  \[
      \operatorname{var}_{1,1/p}(h_\delta)
      \le \liminf_{j\to\infty}\operatorname{var}_{1,1/p}(h_{n_j}) \le C',
  \]
  so $h_\delta\in BV_{1,1/p}(I)$.

  To verify invariance, observe the telescoping identity
  \[
      P_\delta h_n - h_n
      = \frac{1}{n}\sum_{k=0}^{n-1}(P_\delta^{k+1}f - P_\delta^k f)
      = \frac{1}{n}(P_\delta^n f - f).
  \]
  Taking $L^1(I)$ norms and using $\|P_\delta^n f\|_1=\|f\|_1=1$:
  \[
      \|P_\delta h_n - h_n\|_1 \le \frac{2}{n} \to 0.
  \]
  Since $P_\delta:L^1(I)\to L^1(I)$ is a bounded linear operator,
  $h_{n_j}\to h_\delta$ in $L^1(I)$ implies $P_\delta h_{n_j}\to P_\delta h_\delta$
  in $L^1(I)$. Combined with $\|P_\delta h_{n_j}-h_{n_j}\|_1\to 0$ and
  $h_{n_j}\to h_\delta$, we conclude $P_\delta h_\delta = h_\delta$.

  Finally, since $h_{n_j}\to h_\delta$ in $L^1(I)$, there is a further subsequence along
  which $h_{n_j}(x)\to h_\delta(x)$ a.e. Since each $h_{n_j}\ge 0$, we get
  $h_\delta\ge 0$ a.e. Moreover,
  $\|h_\delta\|_1=\lim_{j\to\infty}\|h_{n_j}\|_1=1$.
  \end{proof}

  \begin{lemma}\label{lem:Qdelta-to-id}
Let \(K\subset L^1(I)\) be relatively compact. Then
\[
\sup_{g\in K}\|Q_\delta g-g\|_{1}\to 0
\qquad\text{as }\delta\to 0.
\]
\end{lemma}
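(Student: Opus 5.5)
The plan is to reduce the statement to two facts: strong convergence $Q_\delta g\to g$ in $L^1(I)$ for each fixed $g$, and a uniform bound $\|Q_\delta\|_{L^1\to L^1}\le 1$. A standard $\varepsilon/3$ argument over a finite net of $K$ then upgrades pointwise convergence to uniform convergence on $K$.

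\textbf{Step 1 (uniform bound).} Proposition~\ref{prop:markov} shows, via the double stochasticity of $q_\delta^{\mathrm{per}}$, that $\|Q_\delta g\|_1\le\|g\|_1$ for all $g\in L^1(I)$ and all $\delta\in(0,1/4)$. Hence $\|Q_\delta g-g\|_1\le 2\|g\|_1$, and the maps $g\mapsto Q_\delta g-g$ are uniformly $2$-Lipschitz on $L^1(I)$.

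\textbf{Step 2 (pointwise convergence).} Fix $g\in L^1(I)$ and identify it with its periodization $g^{\mathbb{T}}\in L^1(\mathbb{T})$, as the paper does. On $\mathbb{T}$ we have $Q_\delta g=q_\delta * g$ with $q_\delta(z)=\delta^{-1}q(z/\delta)$ a nonnegative approximate identity. By Minkowski's integral inequality,
\[
\|Q_\delta g-g\|_{L^1(\mathbb{T})}\le\int_{-1}^{1}q(u)\,\|g(\cdot-\delta u)-g\|_{L^1(\mathbb{T})}\,du\le \sup_{|t|\le\delta}\|g(\cdot-t)-g\|_{L^1(\mathbb{T})}.
\]
The right-hand side tends to $0$ by continuity of translation in $L^1(\mathbb{T})$. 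To prove that continuity, approximate $g$ by a continuous periodic function $\phi$ with $\|g-\phi\|_1<\eta$; then use uniform continuity of $\phi$ on the compact torus together with translation invariance of Lebesgue measure. Since $L^1(I)$ and $L^1(\mathbb{T})$ norms agree for these identified functions, this gives $\|Q_\delta g-g\|_1\to0$.

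\textbf{Step 3 (uniformity on $K$).} Given $\varepsilon>0$, relative compactness of $K$ gives a finite set $g_1,\dots,g_N\in L^1(I)$ with $K\subset\bigcup_i B_{L^1}(g_i,\varepsilon/3)$. By Step 2, choose $\delta_0$ such that $\|Q_\delta g_i-g_i\|_1<\varepsilon/3$ for all $i$ and all $\delta<\delta_0$. For $g\in K$, pick $g_i$ with $\|g-g_i\|_1<\varepsilon/3$. Then
\[
\|Q_\delta g-g\|_1\le\|Q_\delta(g-g_i)\|_1+\|Q_\delta g_i-g_i\|_1+\|g_i-g\|_1<\varepsilon,
\]
using Step 1 for the first term.

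The only point needing care is Step 2, where the periodized kernel must genuinely be treated as torus convolution. This is exactly what the paper's identification of $Q_\delta$ with $Q_\delta^{\mathbb{T}}$ provides. Otherwise the argument is routine.
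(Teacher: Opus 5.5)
Your proof is correct and follows the same route as the paper: you take a finite $L^1$-net of $K$, use the $L^1$-contraction property of $Q_\delta$, and show convergence $Q_\delta g_i\to g_i$ at each net point via the torus approximate identity. The only difference is that you justify the approximate-identity step (Minkowski's integral inequality plus continuity of translation in $L^1(\mathbb{T})$), which the paper simply asserts.
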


\begin{proof}
Since \(K\) is relatively compact in \(L^1(I)\), it is totally bounded. Hence, for any
\(\varepsilon>0\), there exist \(g_1,\ldots,g_N\in L^1(I)\) such that for every \(g\in K\)
there exists \(i\) with
\[
\|g-g_i\|_{1}<\varepsilon.
\]

For any \(g\in K\), choose such \(g_i\). Then
\[
\|Q_\delta g-g\|_{1}
\le \|Q_\delta g-Q_\delta g_i\|_{1}
   +\|Q_\delta g_i-g_i\|_{1}
   +\|g_i-g\|_{1}.
\]
Since \(Q_\delta\) is a Markov operator, it is a contraction on \(L^1(I)\), so
\[
\|Q_\delta g-Q_\delta g_i\|_{1}\le \|g-g_i\|_{1}<\varepsilon.
\]
Thus,
\[
\sup_{g\in K}\|Q_\delta g-g\|_{1}
\le 2\varepsilon + \max_{1\le i\le N}\|Q_\delta g_i-g_i\|_{1}.
\]

It remains to show that for each fixed \(i\),
\[
\|Q_\delta g_i-g_i\|_{1}\to 0
\qquad\text{as }\delta\to 0.
\]

Let \(g_i^{\mathbb T}\) denote the periodic extension of \(g_i\) to
\(\mathbb T\). Since \(Q_\delta\) coincides with convolution on \(\mathbb T\)
(via periodic extension), we have
\[
(Q_\delta g_i)(x)=(Q_\delta^{\mathbb T}g_i^{\mathbb T})(x),
\qquad x\in I,
\]
where $Q_\delta^{\mathbb{T}}$ is the torus convolution with kernel
  $q_\delta(x,y) = \frac{1}{\delta}q\!\left(\frac{x-y}{\delta}\right)$ on $\mathbb{T}$.
  Because \(q_\delta\) is the periodized convolution kernel with total mass \(1\) and support shrinking to a point as \(\delta\to 0\),  it forms an approximate identity on $\mathbb{T}$; it follows that
\[
\|Q_\delta^{\mathbb T}g_i^{\mathbb T}-g_i^{\mathbb T}\|_{L^1(\mathbb T)}\to 0.
\]
Restricting to \(I\), we obtain
\[
\|Q_\delta g_i-g_i\|_{1}
\le \|Q_\delta^{\mathbb T}g_i^{\mathbb T}-g_i^{\mathbb T}\|_{L^1(\mathbb T)}
\to 0.
\]

Therefore,
\[
\max_{1\le i\le N}\|Q_\delta g_i-g_i\|_{1}\to 0,
\]
and hence
\[
\limsup_{\delta\to 0}\sup_{g\in K}\|Q_\delta g-g\|_{1}\le 2\varepsilon.
\]
Since \(\varepsilon>0\) is arbitrary, the result follows.
\end{proof}

  \begin{theorem}[Stochastic stability]\label{thm:stochastic-stability}
  Assume $\tau\in\mathcal{T}([0,1];s,\varepsilon)$ is topologically mixing, so that
  $P_\tau$ admits a unique ACIM with density $h\in BV_{1,1/p}(I)$. Let $h_\delta$ be the
  invariant density of $P_\delta$ given by Theorem~\ref{thm:existence}. Then
  $\|h_\delta-h\|_1\to 0$ as $\delta\to 0$.
  \end{theorem}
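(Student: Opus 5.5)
The plan is a compactness-plus-uniqueness argument. The spectral machinery is not needed for the convergence statement itself, although the Keller--Liverani theorem would give the same conclusion.

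\textbf{Step 1: uniform bound and compactness.} From Theorem~\ref{thm:uniform-LY} and $P_\delta h_\delta = h_\delta$, we get $\|h_\delta\|_{1,1/p} \le \alpha\|h_\delta\|_{1,1/p} + C$. Hence
\[
\|h_\delta\|_{1,1/p} \le \frac{C}{1-\alpha} =: M
\]
uniformly in $\delta$; alternatively, this is the bound $C'$ from the proof of Theorem~\ref{thm:existence}. The set $K=\{h_\delta\}_{\delta}$ is therefore bounded in $BV_{1,1/p}(I)$, and so relatively compact in $L^1(I)$. So does $P_\tau K$: by Assumption~\ref{ass:LY}, $\|P_\tau g\|_{1,1/p} \le (\alpha_0+C_0+1)M$ for $g\in K$.

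\textbf{Step 2: identify limit points.} Take any sequence $\delta_j\to 0$. By Step 1 there is a subsequence with $h_{\delta_j}\to h_*$ in $L^1(I)$. The limit satisfies $h_*\ge 0$ and $\|h_*\|_1=1$. By lower semicontinuity of $\operatorname{var}_{1,1/p}$, $h_*\in BV_{1,1/p}(I)$. Then
\[
\|h_* - P_\tau h_*\|_1 \le \|h_*-h_{\delta_j}\|_1 + \|Q_{\delta_j}P_\tau h_{\delta_j} - P_\tau h_{\delta_j}\|_1 + \|P_\tau h_{\delta_j} - P_\tau h_*\|_1 .
\]
The first and third terms tend to $0$, using that $P_\tau$ is an $L^1$ contraction. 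The middle term is bounded by $\sup_{g\in P_\tau K}\|Q_{\delta_j}g-g\|_1$, which tends to $0$ by Lemma~\ref{lem:Qdelta-to-id}, since $P_\tau K$ is relatively compact. Hence $P_\tau h_*=h_*$, so $h_*$ is an invariant probability density of $\tau$.

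\textbf{Step 3: uniqueness.} Topological mixing gives uniqueness of the ACIM, so $h_*=h$. Every sequence $\delta_j\to0$ thus has a subsequence along which $h_{\delta_j}\to h$ in $L^1(I)$. It follows that the whole family converges: $\|h_\delta-h\|_1\to0$.

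The main obstacle is the middle term in Step 2. The convergence $Q_\delta\to \mathrm{id}$ holds only strongly, not in operator norm, so we need compactness of $P_\tau K$ rather than of $K$ alone. This is exactly where the uniform Lasota--Yorke bound of Theorem~\ref{thm:uniform-LY} enters, through Assumption~\ref{ass:LY} and the compact embedding $BV_{1,1/p}(I)\hookrightarrow L^1(I)$.

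A secondary point is uniqueness: the hypothesis is used only to ensure that $h$ is the unique invariant density in $L^1(I)$. If only uniqueness within $BV_{1,1/p}(I)$ is available, this still suffices, because $h_*\in BV_{1,1/p}(I)$.
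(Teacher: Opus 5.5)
Your proof is correct, but it takes a different, softer route than the paper.

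\textbf{The paper's route.} The paper does not argue through limit points. It works with the equivalent norm $\|\cdot\|_{\mathcal B}=\operatorname{var}_{1,1/p}+A\|\cdot\|_1$ and proves $\|P_\delta-P_\tau\|_{\mathcal B\to L^1}\to 0$. It does this by applying Lemma~\ref{lem:Qdelta-to-id} to $P_\tau$ of the whole $\mathcal B$-unit ball, which is the same compactness device you apply to the smaller set $P_\tau K$. It then obtains quasi-compactness of $P_\tau$ from Ionescu-Tulcea--Marinescu, and simplicity of the eigenvalue $1$ from uniqueness of the ACIM. Finally it invokes Keller--Liverani to get $\|\Pi_\delta-\Pi\|_{\mathcal B\to L^1}\to 0$ for the spectral projections, so that $\Pi_\delta h/\|\Pi_\delta h\|_1\to h$. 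It also needs an extra reconciliation step: the fixed space of $P_\delta$ is one-dimensional for small $\delta$, so this limit object agrees with the Ces\`aro-constructed $h_\delta$ of Theorem~\ref{thm:existence}.

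\textbf{Your route.} Your argument uses only four ingredients:
\begin{enumerate}
\item the a priori bound $\|h_\delta\|_{1,1/p}\le C/(1-\alpha)$ coming from the uniform Lasota--Yorke inequality;
\item Lemma~\ref{lem:Qdelta-to-id};
\item lower semicontinuity of the variation;
\item uniqueness of the invariant density.
\end{enumerate}
It needs no spectral theory. It also applies to any choice of normalized nonnegative fixed points $h_\delta\in BV_{1,1/p}(I)$, so the reconciliation step is unnecessary.

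\textbf{What each buys.} The paper's heavier machinery gives more than the stated theorem. For small $\delta$ it yields uniqueness of $h_\delta$ and a uniform spectral gap for $P_\delta$, hence uniform decay of correlations for the noisy system. Keller--Liverani also gives a H\"older-type modulus for $\|\Pi_\delta-\Pi\|_{\mathcal B\to L^1}$ in terms of $\|P_\delta-P_\tau\|_{\mathcal B\to L^1}$. That could be turned into an explicit rate in $\delta$, which your subsequence argument cannot provide. For the qualitative $L^1$ convergence actually claimed, your argument is the more economical one.
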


  \begin{proof}
  We define an equivalent norm on $BV_{1,1/p}(I)$ by
  \[
      \|f\|_{\mathcal{B}} := \operatorname{var}_{1,1/p}(f) + A\|f\|_1,
      \qquad A>0\text{ chosen below},
  \]
  as the strong norm, and retain $\|\cdot\|_1$ as the weak norm.

  \textbf{Step 1: Uniform Lasota--Yorke inequality and operator bound.}
  By Theorem~\ref{thm:uniform-LY}, there exist $\alpha\in(0,1)$ and $C>0$,
  independent of $\delta$, such that
  \[
      \operatorname{var}_{1,1/p}(P_\delta f)
      \le \alpha\operatorname{var}_{1,1/p}(f) + C\|f\|_1,
      \qquad
      \|P_\delta f\|_1 \le \|f\|_1.
  \]
  Choose $A>C/(1-\alpha)$. Adding $A$ times the second inequality to the first:
  \[
      \|P_\delta f\|_{\mathcal{B}}
      \le \alpha\|f\|_{\mathcal{B}} + B\|f\|_{1},
      \qquad B:=C+A(1-\alpha),
  \]
  uniformly in $\delta>0$. Since $\|f\|_{1}\le A^{-1}\|f\|_{\mathcal{B}}$:
  \[
  \|P_\delta f\|_{\mathcal{B}} \leq M\|f\|_{\mathcal{B}}, \quad
  M := \alpha + \frac{B}{A} = \alpha + \frac{C}{A} + (1-\alpha) = 1 + \frac{C}{A} < \infty,
  \]
  so $\sup_{\delta>0}\|P_\delta\|_{\mathcal{B}\to\mathcal{B}}\le M$.

  \textbf{Step 2: Perturbation smallness
  $\|P_\delta-P_\tau\|_{\mathcal{B}\to L^1(I)}\to 0$.}
  Write $P_\delta f - P_\tau f = (Q_\delta-I)P_\tau f$. Since $P_\tau$ satisfies the
  same Lasota--Yorke inequality \cite{RajputGora2025}, it maps the unit ball of
  $(BV_{1,1/p}(I),\|\cdot\|_{\mathcal{B}})$ into a set bounded in $BV_{1,1/p}(I)$.
  By compactness of $BV_{1,1/p}(I)\hookrightarrow L^1(I)$, the set
  \[
      K := P_\tau\bigl(\{\|f\|_{\mathcal{B}}\le 1\}\bigr)
  \]
  is relatively compact in $L^1(I)$. Lemma~\ref{lem:Qdelta-to-id} then gives
  \[
      \sup_{\|f\|_{\mathcal{B}}\le 1}\|(Q_\delta-I)P_\tau f\|_1
      = \sup_{g\in K}\|Q_\delta g-g\|_1 \to 0,
  \]
  and hence $\|P_\delta-P_\tau\|_{\mathcal{B}\to L^1(I)}\to 0$.

  \textbf{Step 3: Spectral gap for $P_\tau$.}

  \textit{Quasi-compactness.} By the Lasota--Yorke inequality for $P_\tau$ and the
  compact embedding $BV_{1,1/p}(I)\hookrightarrow L^1(I)$, the Ionescu--Tulcea--Marinescu
  theorem \cite{ionescu-tulcea-marinescu} implies that $P_\tau$ is quasi-compact on
  $(BV_{1,1/p}(I),\|\cdot\|_{\mathcal{B}})$, with essential spectral radius
  $r_{\mathrm{ess}}(P_\tau)\le\alpha<1$.

  \textit{Simplicity of eigenvalue $1$.} Let $g\in BV_{1,1/p}(I)$ with $P_\tau g=g$.
  Write $g=g^+-g^-$ with $g^\pm\ge 0$. By positivity of $P_\tau$:
  $P_\tau|g|\ge|P_\tau g|=|g|$.
  Since $P_\tau$ is a Markov operator, $\int P_\tau|g|\,dm=\int|g|\,dm$.
  Since $P_\tau|g|\ge|g|$ a.e. and both functions have the same integral,
  $P_\tau|g|=|g|$ a.e. Adding this to $P_\tau g=g$:
  \[
      2P_\tau g^+ = P_\tau|g|+P_\tau g = 2g^+,
  \]
  so $P_\tau g^+=g^+$ a.e., and likewise $P_\tau g^-=g^-$ a.e.
  If $g^+\not\equiv 0$, then $g^+/\|g^+\|_1$ is a normalized nonnegative fixed point
  of $P_\tau$; by uniqueness of the ACIM (standard under topological mixing~\cite{baladi2000,hofbauer1982};
  see Remark~1 below),
  $g^+=c^+h$. Similarly $g^-=c^-h$. Hence $g=(c^+-c^-)h$, and
  $\ker(P_\tau-I)$ is one-dimensional.

  \textit{Isolation.} Since $r_{\mathrm{ess}}(P_\tau)\le\alpha<1$, all eigenvalues of
  $P_\tau$ on the unit circle are isolated with finite multiplicity.
  The eigenvalue $1$ is simple by the above, hence isolated. Its spectral projection
  is $\Pi f=\bigl(\int f\,dm\bigr)h$, which has rank one.

  \textbf{Step 4: Application of Keller--Liverani.}
  Steps 1--3 verify the hypotheses of \cite[Theorem~1]{keller-liverani}:
  \begin{enumerate}
  \item Uniform Lasota--Yorke: $\|P_\delta f\|_{\mathcal{B}}
        \le\alpha\|f\|_{\mathcal{B}}+B\|f\|_{1}$, with $\alpha\in(0,1)$ and
        $B<\infty$ independent of $\delta$.
  \item Perturbation smallness:
        $\|P_\delta-P_\tau\|_{\mathcal{B}\to L^1(I)}\to 0$.
  \item Spectral isolation: eigenvalue $1$ of $P_\tau$ is isolated and simple,
        with rank-one spectral projection $\Pi$.
  \end{enumerate}
  The Keller--Liverani theorem therefore yields spectral projections $\Pi_\delta$
  (rank one for all sufficiently small $\delta$) satisfying
  \[
      \|\Pi_\delta-\Pi\|_{\mathcal{B}\to L^1(I)}\to 0 \quad\text{as }\delta\to 0.
  \]
  Applying this to $h\in BV_{1,1/p}(I)$:
  \[
      \|\Pi_\delta h-h\|_1
      \le \|\Pi_\delta-\Pi\|_{\mathcal{B}\to L^1(I)}\|h\|_{\mathcal{B}}\to 0,
  \]
  and $\|\Pi_\delta h\|_1\to 1$. The unique normalized invariant density of $P_\delta$ is
  \[
      h_\delta = \frac{\Pi_\delta h}{\|\Pi_\delta h\|_1},
  \]
  and therefore $\|h_\delta-h\|_1\to 0$.

  It remains to observe that the invariant density produced here coincides with that of
  Theorem~\ref{thm:existence}. Indeed, Keller--Liverani implies that for all sufficiently
  small $\delta>0$, the eigenspace of $P_\delta$ at eigenvalue $1$ is one-dimensional.
  Hence any nonnegative normalized fixed point of $P_\delta$ in $BV_{1,1/p}(I)$ must equal
  $\Pi_\delta h/\|\Pi_\delta h\|_1$, so the two constructions agree.
  \end{proof}

  \begin{remark}\label{rem:mixing}
  Topological mixing holds for $\tau\in\mathcal{T}([0,1];s,\varepsilon)$ whenever the
  Markov graph of the partition $\{a_0,\ldots,a_q\}$ is aperiodic and strongly connected,
 in particular when every branch is full (i.e.\ $\tau([a_{i-1}, a_i]) = [0,1]$ for all $i$).
  See~\cite{hofbauer1982}.
  \end{remark}

\section{Example: Nonlinear Piecewise Expanding Map}

We present a nonlinear example illustrating the applicability of our results beyond piecewise linear maps.

\begin{example}[Nonlinear piecewise expanding map]\label{ex:nonlinear}
Let $\eta > 0$ be such that $\eta < \frac{1}{2\pi}$, and define
\[
F(x) := 2x + \eta \sin(2\pi x), \qquad x \in [0,1].
\]
Then
\[
F'(x) = 2 + 2\pi \eta \cos(2\pi x) \ge 2 - 2\pi \eta > 1,
\]
so $F$ is strictly increasing on $[0,1]$. Since $F(0)=0$ and $F(1)=2$, by the intermediate value theorem there exists a unique $a \in (0,1)$ such that
\[
F(a) = 1.
\]

Define $\tau : [0,1] \to [0,1]$ by
\[
\tau(x) = F(x) \bmod 1.
\]
Then $\tau$ can be written as a piecewise map with two branches:
\[
\tau(x) =
\begin{cases}
2x + \eta \sin(2\pi x), & x \in [0,a), \\
2x + \eta \sin(2\pi x) - 1, & x \in [a,1].
\end{cases}
\]
Each branch extends to a $C^{1+\varepsilon}$ function on its closure, since $\sin(2\pi x)$ is smooth. Moreover,
\[
|\tau'(x)| = |F'(x)| \ge 2 - 2\pi \eta =: s > 1,
\]
so $\tau$ is uniformly expanding. Since $F'$ is smooth, it is in particular Hölder continuous, hence each branch is $C^{1+\varepsilon}$.

Furthermore, $\tau$ is strictly increasing on each branch. Since
\[
\tau(0) = 0, \quad \tau(a) = 1,
\]
the first branch maps $[0,a]$ onto $[0,1]$. Similarly,
\[
\tau(a) = 0, \quad \tau(1) = 1,
\]
so the second branch maps $[a,1]$ onto $[0,1]$. Thus both branches are full.

Therefore, $\tau \in \mathcal{T}([0,1]; s, \varepsilon)$ and is topologically mixing. By Theorem~4.3, the invariant densities $h_\delta$ of the perturbed operators $P_\delta = Q_\delta P_\tau$ converge in $L^1(I)$ to the invariant density $h$ of $P_\tau$ as $\delta \to 0$.
\end{example}
This example shows that our results apply to genuinely nonlinear expanding systems. In particular, the use of the space $BV_{1,1/p}(I)$ allows one to treat maps with limited regularity beyond the classical bounded variation setting.
\section{Remarks and Further Directions}

In this paper, we established stochastic stability of absolutely continuous invariant measures for piecewise expanding $C^{1+\varepsilon}$ maps in the framework of generalized bounded variation spaces $BV_{1,1/p}(I)$. The key ingredient is a Lasota--Yorke inequality uniform with respect to the perturbation parameter (Thm.~\ref{thm:uniform-LY}), which controls the spectral behavior of the perturbed Frobenius--Perron operators $P_\delta$.

 These results demonstrate that stochastic stability persists under minimal $C^{1+\varepsilon}$
  regularity assumptions ($\varepsilon > 0$). They extend the stochastic stability result
  of G\'ora~\cite{gora1984}---which treated the same map class via Rychlik's $C_\varepsilon$
  seminorm under more restrictive expansion conditions---to the $BV_{1,1/p}(I)$ framework,
  and complement the spectral stability theory of~\cite{keller-liverani}.

Several directions for further research remain open:
\begin{enumerate}
\item Higher dimensions: piecewise expanding maps on $\mathbb{R}^d$ or manifolds, adapting $BV_{1,1/p}(I)$ via Whitney jets.
\item Non-uniform expansion: indifferent fixed points or critical points, combining with indifferent Lasota--Yorke estimates.
\item Refined spaces: $BV_{t,1/p}$ for $t>1$, exploring regularity/stability trade-offs.
\end{enumerate}

These extensions could impact applications in stochastic numerics and data assimilation for low-regularity dynamics.

\end{document}